\let\Bbb\mathbb
\let\euc\mathcal
\def\>{\relax\ifmmode\mskip.666667\thinmuskip\relax\else\kern.111111em\fi}
\def\<{\relax\ifmmode\mskip-.333333\thinmuskip\relax\else\kern-.0555556em\fi}
\def\vsk#1>{\vskip#1\baselineskip}
\def\vv#1>{\vadjust{\vsk#1>}\ignorespaces}
\def\vvn#1>{\vadjust{\nobreak\vsk#1>\nobreak}\ignorespaces}
\def\vvgood{\vadjust{\penalty-500}} \let\alb\allowbreak
 \let\sssty\scriptscriptstyle
\def\plait#1{\par\hangindent2\parindent\indent\kern\parindent
\llap{#1\enspace}\ignorespaces}
\def\lsym#1{#1\alb\dots\relax#1\alb} \def\lc{\lsym,}
\let\Smallskip\smallskip
\def\smallskip{\par\Smallskip}
\let\Medskip\medskip
\def\medskip{\par\Medskip}
\let\Bigskip\bigskip
\def\bigskip{\par\Bigskip}
\let\Maketitle\maketitle
\def\maketitle{\Maketitle\thispagestyle{empty}\let\maketitle\empty}
\def\beq{\begin{equation}}
\def\eeq{\end{equation}}
\def\be{\begin{equation*}}
\def\ee{\end{equation*}}
\def\bean{\begin{eqnarray}}
\def\eean{\end{eqnarray}}
\def\bea{\begin{eqnarray*}}
\def\eea{\end{eqnarray*}}
\def\Ref#1{{\rm(\ref{#1})}}
\let\dl\delta  
 \let\eps\varepsilon \let\epsilon\eps
\let\la\lambda
 \let\phi\varphi
\let\Tilde\widetilde
\let\der\partial
\let\geq\geqslant
\let\leq\leqslant
\let\ox\otimes
\def\nin{\not\in}
\def\R{\Bbb R}
\def\C{\Bbb C}
\def\Z{\Bbb Z}
\let\on\operatorname
\def\End{\on{End}}
\def\Im{\on{Im}}
\def\qdet{\on{qdet}}
\def\Wr{\on{Wr}}
\newtheorem{theorem}{Theorem}[section]
\newtheorem{lem}[theorem]{Lemma}
\newtheorem{prop}[theorem]{Proposition}
\theoremstyle{definition}
\theoremstyle{remark}
\numberwithin{equation}{section}
\let\bs\boldsymbol
\let\mc\mathcal
\let\nc\newcommand
\nc{\ch}{\mbox{ch}}
\nc{\pone}{{\mathbb C}{\mathbb P}^1}
\nc{\pa}{\partial}
\nc{\F}{{\euc F}}
\nc{\ri}{\rangle}
\nc{\lef}{\langle}
\nc{\W}{{\euc W}}
\nc{\ep}{\epsilon}
\nc{\su}{\widehat{{\mathfrak sl}}_2}
\nc{\sw}{{\mathfrak s}{\mathfrak l}}
\nc{\g}{{\mathfrak g}}
\nc{\h}{{\mathfrak h}}
\nc{\n}{{\mathfrak n}}
\nc{\N}{\widehat{\n}}
\nc{\G}{\widehat{\g}}
\nc{\De}{\Delta_+}
\nc{\gt}{\widetilde{\g}}
\nc{\Ga}{\Gamma}
\nc{\one}{{\mathbf 1}}
\nc{\z}{{\mathfrak Z}}
\nc{\zz}{{\euc Z}}
\nc{\Hh}{{\euc H}_\beta}
\nc{\qp}{q^{\frac{k}{2}}}
\nc{\qm}{q^{-\frac{k}{2}}}
\nc{\wt}{\widetilde}
\nc{\qn}{\frac{[m]_q^2}{[2m]_q}}
\nc{\cri}{_{\on{cr}}}
\nc{\kk}{h^\vee}
\nc{\sun}{\widehat{\sw}_N}
\nc{\hh}{\widehat{\mathfrak h}}
\nc{\HH}{{\euc H}_{q,t}}
\nc{\ca}{\wt{{\euc A}}_{h,k}(\sw_2)}
\nc{\gl}{\widehat{{\mathfrak g}{\mathfrak l}}_2}
\nc{\el}{\ell}
\nc{\s}{{\mathbf s}}
\nc{\WW}{\W_\beta}
\nc{\scr}{{\mathbf S}}
\nc{\ab}{{\mathbf a}}
\nc{\rr}{r}
\nc{\ol}{\overline}
\nc{\con}{qt^{-1} + q^{-1}t}
\nc{\den}{q^{\el-1} t^{-\el+1}+ q^{-\el+1} t^{\el-1}}
\nc{\ds}{\displaystyle}
\nc{\B}{B}
\nc{\A}{{\mathbb A}}
\nc{\GG}{{\euc G}}
\nc{\UU}{{\euc U}}
\nc{\MM}{{\euc M}}
\nc{\CC}{{\euc C}}
\nc{\GL}{{}^L G}
\nc{\dzz}{\frac{dz}{z}}
\nc{\rep}{{\euc R}ep \;}
\nc{\uqg}{U_q \G}
\nc{\uqgg}{U_q \g}
\nc{\Fq}{{\mathbb F}_q}
\nc{\stimes}{\ltimes}
\nc{\K}{\hat{\euc K}}
\nc{\Ql}{\ol{\mathbb Q}_\ell}
\nc{\ga}{\gamma}
\nc{\PL}{{}^L P}
\nc{\E}{\mc E}
\nc{\mbf}{\mathbf}
\nc{\bb}{{\mathfrak b}}
\nc{\OO}{{\mc O}}
\nc{\Po}{{\mc P}}
\nc{\V}{{\mc V}}
\nc{\yy}{{\mc Y}}
\nc{\M}{\euc M}
\nc{\Coh}{{{\euc C}oh}}
\nc{\Cohn}{\Coh_n}
\nc{\Gaf}{{\mathbb G}_{a,\Fq}}
\nc{\KK}{{\mathfrak k}}
\def\gln{\mathfrak{gl}_N}
\def\Bc{\euc B}
\def\Dc{\euc D}
\def\Uc{\euc U}
\def\Zc{\euc Z}
\def\Rc{\check R}
\def\Qb{\,\overline{\!Q\<\<}\>\vphantom{Q}}
\def\Qbb{\,\overline{\!\bs Q\<\<}\>\vphantom{\bs Q}}
\def\pti{\tilde p}
\def\Qt{\Tilde Q}
\def\Qbt{\Tilde{\bs Q}}
\def\Tt{\Tilde T}
\def\Vt{\Tilde V}
\def\zti{\tilde z}
\def\ztb{\tilde{\bs z}}
\def\half{{\sssty 1\</\<2}}
\def\Real{\on{Re}}
\def\Imag{\on{Im}}
\begin{document}
\title[Reality property of discrete Wronski map with imaginary step]
{Reality property of discrete Wronski map\\[3pt] with imaginary step}

\author[E.\,Mukhin, V\<.\,Tarasov, and \>A.\<\,Varchenko]
{E.\,Mukhin$\>^*$, V\<.\,Tarasov$\>^\star$, and \>A.\<\,Varchenko$\>^\diamond$}

\maketitle

\begin{center}
\vsk-.2>
{\it $\kern-.4em^{*,\star}\<$Department of Mathematical Sciences,
Indiana University\,--\>Purdue University Indianapolis\kern-.4em\\
402 North Blackford St, Indianapolis, IN 46202-3216, USA\/}

\medskip
{\it $^\star\<$St.\,Petersburg Branch of Steklov Mathematical Institute\\
Fontanka 27, St.\,Petersburg, 191023, Russia\/}

\medskip
{\it $^\diamond\<$Department of Mathematics, University of North Carolina
at Chapel Hill\\ Chapel Hill, NC 27599-3250, USA\/}
\end{center}

{\let\thefootnote\relax
\footnotetext{\vsk-.8>\noindent
$^*$\,Supported in part by NSF grant DMS-0900984\\
$^\star$\,Supported in part by NSF grant DMS-0901616\\
$^\diamond$\,Supported in part by NSF grant DMS-0555327}}

\medskip

\begin{abstract}
For a set of quasi-exponentials with real exponents, we consider the discrete
Wronskian (also known as Casorati determinant) with pure imaginary step
\,$2h$. We prove that if the coefficients of the discrete Wronskian are real
and for every its roots the imaginary part is at most \,$|h|$, then the complex
span of this set of quasi-exponentials has a basis consisting of
quasi-exponentials with real coefficients. This result is a generalization of
the statement of the B.~and M.\,Shapiro conjecture on spaces of polynomials.
The proof is based on the Bethe ansatz for the {\sl XXX\/} model.
\end{abstract} \maketitle

\section*{Introduction}
It follows from the Fundamental Theorem of Algebra that a polynomial
$p(x)\in\C[x]$ has real coefficients (up to an overall constant)
if all its roots are real.
The B.~and M.\,Shapiro conjecture for the case of Grassmannian, proved
in~\cite{shapiro}, is a generalization of this fact. It claims that if the
Wronskian of \,$N$ polynomials with complex coefficients has real roots only,
then the space spanned by these polynomials has a basis consisting of
polynomials with real coefficients. For $N=2$ (the case proved in~\cite{EG}),
the statement can be formulated in the following attractive form:
if all critical points of a rational function are real, then the function is
real modulo a fractional linear transformation.

Several generalizations of these results are known, see \cite{EGSV},
\cite{dshapiro}. In \cite{dshapiro}, it is shown that if the Wronskian of $N$
quasi-exponentials \,$p_i(x)\>e^{\la_ix}$, where \,$\la_1\lc\la_N$ are real
numbers and $p_1\lc p_N$ are polynomials with complex coefficients, has real
roots only, then the space spanned by these quasi-exponentials has a basis such
that all polynomials have real coefficients. It is also shown that the result
holds true if the Wronskian is replaced with the discrete Wronskian with real
step and some additional restrictions on the roots of the Wronskian are
imposed. A similar statement is proved in \cite{dshapiro} about spaces of
quasi-polynomials of the form $x^{z_i}p_i(x,\log x)$, where $z_i$ are real
numbers and $p_i(x,y)$ are polynomials with complex coefficients, and their
Wronskians.

In this paper we present yet another instance of this phenomenon. We consider
the discrete Wronskian with purely imaginary step $2h$ of a space of $N$
quasi-exponentials $e^{\la_ix}p_i(x)$ with real exponents $\la_i$. We show
that if this discrete Wronskian is real and for every its roots the imaginary
part is at most \,$|h|$, then the the space of quasi-exponentials has a basis
such that all polynomials have real coefficients.

Our method is similar to that of \cite{shapiro}, \cite{dshapiro}. Namely,
we consider the quantum integrable model of {\sl XXX\/}-type associated with
a tensor product of vector representations of $\gln$. This model can be solved
by the method of the algebraic Bethe ansatz. For each space of
quasi-exponentials $V$ of dimension $N$ in a generic position, we associate
an eigenvector of all transfer matrices of the model (Bethe vector), such that
the eigenvalues of the transfer matrices are the coefficients of the difference
operator of order \,$N$ annihilating $V$. Then we use properties of the
transfer matrices to show an appropriate symmetry of the coefficients, which
implies our result.

The paper is organized as follows. In Section \ref{form sec} we describe
our result, see Theorem \ref{theorem1}, and give some examples.
Section~\ref{sec2} provides the required results from the representation
theory of the Yangian $Y(\gln)$. Section~\ref{proof sec} contains the proof of
the main theorem. We discuss curious reformulations of Theorem~\ref{theorem1}
in Section~\ref{ref sec}. The Appendix contains a proof of the symmetry of
transfer matrices, see Proposition~\ref{symmetry}, used in this paper.

\section{Formulation of the results}\label{form sec}
\subsection{The main theorem}
A function of the form \,$p(x)\>Q^{x}$, where \>$Q$ is a nonzero complex
number with the argument fixed, and $p(x)\in\C[x]$ is a polynomial,
is called {\it a quasi-exponential function with base \,$Q$}.
A quasi-exponential function \,$p(x)\>Q^{x}$ is called {\it real}
if $Q$ is real and $p(x)$ has real coefficients, $p(x)\in\R[x]$.

Fix a natural number $N\geq 2$. Let $\bs Q=(Q_1,\dots,Q_N)$ be a sequence
of nonzero complex numbers with their arguments fixed.
We call a complex vector space of dimension $N$ spanned by quasi-exponential
functions \,$p_i(x)\>Q_i^x$, with $i=1,\dots,N$, a {\it space of
quasi-exponentials with bases $\bs Q$}. The space of quasi-exponentials is
called {\it real\/} if it has a basis consisting of real quasi-exponential
functions.

Let $h$ be a non-zero complex number.
{\it The discrete Wronskian with step $2h$},
also known as Casorati determinant,
of functions $f_1(x),\dots,f_N(x)$ is the $N\times N$ determinant
\be
\label{dwr}
\Wr^d_h(f_1,\dots, f_N)\,=\,\det\>\Bigl(f_i\bigl(x+h(2j-N-1)\bigr)\Bigr)
_{i,j=1\lc N}\,.
\ee
Note that the notation here is slightly different from that of \cite{dshapiro}.

The discrete Wronskians of two bases for a space of functions
differ by multiplication by a nonzero number.

Let $V$ be a space of quasi-exponentials with bases $\bs Q$.
The discrete Wronskian of any basis for $V$ is a quasi-exponential function of the form
$w(x)\prod_{j=1}^N Q_j^x$, where $w(x)\in\C[x]$.
The unique representative with a monic polynomial
$w(x)$ is called {\it the discrete Wronskian of\/} $V$ and is denoted by
$\Wr^d_h(V)$.

The main result of this paper is the following theorem.

\begin{theorem}
\label{theorem1}
Let $V$ be a space of quasi-exponentials with real bases
\vvn.1>
\,$\bs Q\in(\R^\times)^N$,
\,and let $\Wr^d_h(V)=w(x)\prod_{j=1}^NQ_j^x$, where
\vvn.16>
$w(x)=\prod_{i=1}^n(x-z_i)$. Assume that \;$\Real h=0$, \;$w(x)\in\R[x]$, and
\;$|\<\Imag z_i|\leq |h|$ \,for all $i=1\lc n$. \,Then the space $V$ is real.
\end{theorem}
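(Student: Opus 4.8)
The plan is to realize the space $V$ of quasi-exponentials as the kernel of a monic difference operator of order $N$ and to identify its coefficients with the eigenvalues of the transfer matrices of the $\mathfrak{gl}_N$ $\mathit{XXX}$ model via the algebraic Bethe ansatz. Concretely, to a generic space $V$ with real bases $\bs Q$ one associates a Bethe vector in a tensor product of vector representations of the Yangian $Y(\gln)$, so that the difference operator annihilating $V$ — equivalently, the collection of coefficients of the transfer matrices — is encoded by this eigenvector. The reality of $V$ is then equivalent to an appropriate symmetry of these coefficients, which we aim to deduce from a symmetry property of the transfer matrices themselves (Proposition~\ref{symmetry}, proved in the Appendix). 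This is the same overall strategy as in \cite{shapiro} and \cite{dshapiro}, and I would follow it closely.

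First I would translate the hypotheses into the language of the difference operator. Writing $\tau$ for the shift $x\mapsto x+2h$ (or its appropriate square root with step $h$), the space $V$ is the kernel of a monic operator $D = \sum_{k=0}^{N} c_k(x)\,\tau^{k}$ whose coefficients $c_k$ are determined by the discrete Wronskian data; the leading and trailing coefficients are read off from $\prod_j Q_j^x$ and from $w(x)$. The key point is that purely imaginary step, $\Real h = 0$, together with $\bs Q\in(\R^\times)^N$, makes complex conjugation act in a controlled way on the shift operator: conjugating $x\mapsto x+2h$ sends $h\mapsto -h$, i.e. reverses the step. I would therefore set up the complex-conjugate space $\overline V$ and compare the operator $\overline D$ annihilating it with a reflected version of $D$. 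The reality statement $V=\overline V$ (which, by the discussion preceding the theorem, is what guarantees a real basis) should come down to matching these two operators.

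Second, I would invoke the Bethe-ansatz dictionary from Section~\ref{sec2}: the coefficients $c_k(x)$ correspond to eigenvalues of transfer matrices $T(u)$ acting on the relevant Yangian module, the evaluation parameters and highest weights being fixed by $\bs Q$ and by the exponents/degrees of the $p_i$. The symmetry of transfer matrices in Proposition~\ref{symmetry} should give an identity relating $T(u)$ to its conjugate/reflected counterpart, and it is precisely here that the hypothesis $|\Imag z_i|\le |h|$ enters: it guarantees that the Bethe roots associated to $w(x)$ lie in a strip of width $|h|$, which is the regime in which the relevant Hermitian or self-adjointness property of the transfer matrices (and hence reality of the spectrum) holds. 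Translating this spectral symmetry back through the dictionary yields the desired symmetry of the $c_k$, and thus $V=\overline V$.

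The main obstacle, I expect, is controlling the positions of the Bethe roots: the condition $|\Imag z_i|\le|h|$ must be shown to force the Bethe vector to be (up to normalization) fixed by the relevant anti-involution, so that its eigenvalue — the difference operator — is self-conjugate. This requires a careful non-degeneracy / separation argument showing that no roots collide in a way that destroys the symmetry, plus a limiting argument to remove the genericity assumption on $V$ used to set up the Bethe ansatz in the first place. In other words, the analytic heart is passing from the generic case, where the Bethe-ansatz eigenvector exists and the symmetry of Proposition~\ref{symmetry} applies verbatim, to the general case by a continuity/degeneration argument in which the strip condition on $\Imag z_i$ is exactly what keeps the reflected and conjugated operators in agreement throughout the limit.
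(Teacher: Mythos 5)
Your overall strategy is indeed the paper's: reduce to generic $\bs Q,\bs z$ (Lemma~\ref{31}), rescale variables ($\Qt_i=Q_i^{2h}$, $\zti_i=z_i/(2h)$, so that the step becomes $1$ and the Yangian conventions apply), attach to $V$ an eigenvector $v$ of the Bethe algebra on $\bs W(\ztb)$ by Proposition~\ref{cite2}, and use Proposition~\ref{symmetry} to force a symmetry of the eigenvalues, hence of the difference operator annihilating $V$. However, at the one point where the hypothesis $|\Imag z_i|\le|h|$ must do its work, your plan has a genuine gap, and the mechanism you propose in its place would not go through.

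You suggest that the strip condition should force the Bethe vector itself to be fixed, up to normalization, by an anti-involution, to be established by a ``non-degeneracy\,/\,separation argument showing that no roots collide.'' No such statement about $v$ is needed, and it is unclear how one would ever prove it directly. The paper's actual mechanism is different: since $w\in\R[x]$, the roots split into conjugate pairs $\bar z_{2i-1}=z_{2i}$, $i=1\lc k$, plus real ones; since $\Real h=0$ and $\bs Q$ is real, one has $\overline{\Qt_j}=\Qt_j^{-1}$ and $-\bar{\ztb}$ equals $\ztb$ with each conjugate pair swapped. Part (1) of Proposition~\ref{symmetry} absorbs this swap into $R$-matrices, which is exactly why one introduces the twisted form $\langle v,w\rangle_k=\bigl\langle v,\prod_{i=1}^k\Rc_{(2i-1,2i)}(\zti_{2i-1}-\zti_{2i})\,w\bigr\rangle$; part (2) then says that, with respect to $\langle\cdot,\cdot\rangle_k$, the adjoint of $B_{j,\Qbt}(x;\ztb)$ is $\overline{b_{\Qbt}(x;\ztb)}\,B_{N-j,\Qbt}(-\bar x-1;\ztb)$, again an element of the Bethe algebra. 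The strip condition is used once and only once: it makes this form positive definite, because $\zti_{2i-1}-\zti_{2i}=\pm\>\Imag(z_{2i-1})/|h|$ is a real number of modulus less than $1$ (strict inequality holding after the genericity reduction), so each $\Rc=tP+1$ has eigenvalues $1\pm t>0$. Positive definiteness — concretely $\langle v,v\rangle_k\neq0$ — is what converts the operator adjoint relation into the eigenvalue relation $\overline{B_{j,\Qbt,v}(x;\ztb)}=\overline{b_{\Qbt}(x;\ztb)}\,B_{N-j,\Qbt,v}(-\bar x-1;\ztb)$, with no self-conjugacy of $v$ required. Without this twisted positive form your argument stalls precisely here: a conjugation symmetry of the transfer matrices alone does not constrain the eigenvalue on one particular eigenvector, since under an indefinite pairing $v$ could be isotropic. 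You also gloss the endgame: from the eigenvalue symmetry one must still deduce reality of each polynomial, which the paper gets from uniqueness, for generic $\Qbt$, of the polynomial $q$ solving $\Dc_{\Qbt,v}(x;\ztb)\bigl(q(x)\>\Qt_i^x\bigr)=0$, forcing $\overline{\pti(-\bar x-N-1)}=\pti(x)$ and hence reality of $p_i$ after undoing the change of variables.
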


Theorem \ref{theorem1} is proved in Section \ref{proof sec}.

\medskip
The B.~and M.\,Shapiro conjecture proved in \cite{shapiro} asserts that if all
roots of the differential Wronski determinant of a space of polynomials $V$
are real then $V$ is real. The B.~and M.\,Shapiro conjecture follows from
Theorem~\ref{theorem1} by taking the limit $h\to 0$.

\subsection{Examples}
\strut
\medskip
\noindent{\it Example 1.}
Let $N=2$, \,$\bs Q=(1,Q)$, \,$p_1(x)=x+a$, \,$p_2(x)=x+b$.
Then the discrete Wronskian has two roots. Let the discrete Wronskian be a real
function. Then without loss of generality we can assume its zeros to be at
$\pm A$, where $A$ is either real or pure imaginary. We have the following
equation on $a,b$,
\vvn.3>
\be
\Wr^d_h(x+a,Q^x(x+b))=Q^x(Q^h-Q^{-h})(x+A)(x-A)\,,
\vv.3>
\ee
which has two solutions:
\vvn-.2>
\be
a\,=\,-b\,=\,\frac{(Q^{h}+Q^{-h})h\pm\sqrt{(Q^{h}-Q^{-h})^2A^2+4h^2}}
{(Q^{h}-Q^{-h})}\;.
\vv.4>
\ee
If \;$\Real h=0$ and \,$Q\in\R$, then $|Q^h|=1$, \,$Q^h+Q^{-h}$ is real, and
\,$Q^h-Q^{-h}$ is purely imaginary. Hence, $a,b$ are real for real $A$, and
for purely imaginary $A$ such that $|A|\leq |h|$.

However, if $A$ is purely imaginary and $|A|>|h|$, then for $Q=e^{\pi/2h}$
the numbers $a,b$ are not real. In this example, Theorem~\ref{theorem1} gives
the largest possible set of values of $A$ such that the numbers $a$ and $b$ are
real for all real \,$Q\ne0$.

\medskip
\noindent{\it Example 2.}
Let $N=2$, \,$\bs Q=(1,1)$, \,$p_1(x)=x+a$, $p_2(x)=x^3+bx^2+c$. Then the
discrete Wronskian has three roots, which we assume to be at $0$, $A$ and $B$.
We also assume that both $A$ and $B$ are real, or $A$ and $B$ are complex
conjugate.

This case corresponds to the following equation on $a,b,c$,
\vvn.3>
\be
\Wr^d_h(x+a,x^3+bx^2+c)=4hx(x-A)(x-B)\,,
\vv.1>
\ee
which has two solutions:
\vvn.3>
\begin{align*}
a\, &{}=\, -(A+B)/3\pm 1/3\sqrt{-AB-3h^2+A^2+B^2},\\
b\, &{}=\, -(A+B)\mp \sqrt{-AB-3h^2+A^2+B^2}, \\
c\, &{}=\, (-4/3+2h^2)(A+B)+h^2/3\sqrt{-AB-3h^2+A^2+B^2}\,.\\[-14pt]
\end{align*}
Let \;$\Real h=0$. Then $a,b,c$ are real for real $A,B$. If $A$ and $B$ are
complex conjugate, then $a,b,c$ are real if and only if
\;$3\<\>(\Imag A)^2-(\Real A)^2\leq 3|h|^2$.

The equation \;$3\<\>(\Imag A)^2-(\Real A)^2=3|h|^2$ defines a hyperbola
on the complex plane which is tangent to the lines $\Im A=\pm|h|$.

\section{Transfer matrices and the Bethe subalgebra}
\label{sec2}
In this section we recall the required results from the representation
theory of the Yangian \,$Y(\gln)$.

\medskip
Let $W=\C^N$ with a chosen basis $v_1,\dots,v_N$.
For an operator $M\in\End(W)$, we denote
$M_{(i)}=1^{\ox(i-1)}\ox M\ox 1^{\ox(n-i)}$.
Similarly, for an operator
$M\in\on{End}(W^{\ox 2})$, we denote by $M_{(ij)}\in\End(W^{\ox n})$
the operator acting as $M$ on the $i$-th and $j$-th factors of $W^{\ox n}$.

Let $E_{ab}\in\End(W)$ be the linear operator with the matrix
$(\delta_{ia}\delta_{jb})_{i,j=1\lc N}$.
Let $R(x)$ be the {\it rational $R$-matrix\/},
\vvn-.4>
\be
R(x)\,=\,1+x^{-1}\sum_{a,b=1}^N\,E_{ab}\ox E_{ba}\,=\,1+x^{-1}\>P\,.
\vv-.1>
\ee
where \,$P\in\End(W^{\ox 2})$ \,is the flip map:
\,$P(v\ox w)=w\ox v$ \,for all \,$v,w\in W$.

The {\it Yangian\/} $Y(\gln)$ is the unital associative algebra over $\C$
with generators $T_{ab}^{\{s\}}$, \,$a,b=1\lc N$, \,$s\in\Z_{\geq 1}$, and
relations
\beq
\label{rel}
R_{(12)}(x-y)T_{(1)}(x)T_{(2)}(y)=T_{(2)}(y)T_{(1)}(x)R_{(12)}(x-y)\,,
\eeq
where \;$T(x)=\sum_{a,b=1}^N E_{ab}\ox T_{ab}(x)$ \;and
\;$T_{ab}(x)=\delta_{ab}+\sum_{s=1}^\infty T_{ab}^{\{s\}}x^{-s}$.
\vvn.16>
The Yangian $Y(\gln)$ is a Hopf algebra, with the coproduct and antipode
given by
\vvn.2>
\begin{gather}
\Delta\bigl(T_{ab}(x)\bigr)\,=\,\sum_{i=1}^N\,T_{ib}(x)\ox T_{ai}(x)\,,
\notag
\\
\label{antipode}
\sum_{a,b=1}^N E_{ab}\ox S\bigl(T_{ab}(x)\bigr)\,=\,\bigl(T(x)\bigr)^{-1}\,.
\end{gather}
The Yangian $Y(\gln)$ is a flat deformation of $U\bigl(\gln[t]\bigr)$,
the universal enveloping algebra of the current Lie algebra $\gln[t]$.

Given $z\in \C$, define the $Y(\gln)$-module structure on the space $W$
by letting \,$T_{ab}(x)$ act as $\delta_{ab}+(x-z)^{-1}E_{ba}$.
We denote this module $W(z)$ and call it the {\it evaluation module}.

\medskip
Let \,$\bs Q=(Q_1,\dots,Q_N)\in(\C^\times)^N$, and
\,$Q=\on{diag}(Q_1,\dots,Q_N)$ be the diagonal matrix with diagonal entries
$Q_a$. \,Let $\der=\der/\der x$.
\,Set \,$X_{ab}=\dl_{ab}\<-Q_a\>T_{ab}(x)\>e^{-\der}$.
Define the universal difference operator by
\vvn-.2>
\be
\Dc_{\bs Q}\,=\,\sum_{\sigma\in S_N}\,(-1)^\sigma\,
X_{1\sigma(1)}\>X_{2\sigma(2)}\dots X_{N\sigma(N)}\,.
\vv-.4>
\ee
Write
\vvn-.1>
\be
\Dc_{\bs Q}\,=\,1-B_{1,\bs Q}(x)e^{-\der}+B_{2,\bs Q}(x)e^{-2\der}-
\dots +(-1)^N B_{N,\bs Q}(x)e^{-N\der}\,,
\vv.3>
\ee
where \,$B_{i,\bs Q}(x)$ are series in $x^{-1}$ with coefficients in $Y(\gln)$.
The series $B_{i,\bs Q}(x)$ coincides with the higher transfer matrices,
introduced in~\cite{KS}, see~\cite{CT}, \cite{bethe}, and formula~\Ref{B=T}.
Set \,$B_{0,\bs Q}(x)=1$.

\smallskip
The unital subalgebra of $Y(\gln)$ generated by the coefficients of the series
$B_{i,\bs Q}(x)$, $i=1,\dots,N$, is called the {\it Bethe algebra} and denoted
by $\Bc_{\bs Q}$. The Bethe algebra is commutative \cite{KS}.

\smallskip
Given \,$\bs z=(z_1,\dots,z_n)\in\C^n$, \,denote by
\,$\bs W({\bs z})=W(z_1)\ox\dots\ox W( z_n)$ \, the tensor product of
evaluation modules.
For \,$i=0\lc N$, \,let \,$B_{i,\bs Q}(x;\bs z)$ be the image of the series
$B_{i,\bs Q}(x)$ in $\End\bigl(\bs W(\bs z)\bigr)[[x^{-1}]]$. The series
$B_{i,\bs Q}(x;\bs z)$ sums up to a rational function in $x$. We have
\vvn.2>
\be
B_{N,\bs Q}(x;\bs z)\,=\,b_{\bs Q}(x;\bs z)\cdot\on{Id}\,,\qquad
b_{\bs Q}(x;\bs z)\,=\,Q_1\dots Q_N\,\prod_{i=1}^n\,\frac{x-z_i+1}{x-z_i}\;.
\vvgood
\vv.2>
\ee

Let \,$v\in\bs W(\bs z)$ be an eigenvector of the Bethe algebra $\Bc_{\bs Q}$,
\vvn.3>
\be
B_{i,\bs Q}(x;\bs z)\,v\,=\,B_{i,\bs Q,v}(x;\bs z)\,v\,,\qquad i=1\lc N\,,
\vv.3>
\ee
where \,$B_{i,\bs Q, v}(x;\bs z)$ are rational functions in $x$ with complex
coefficients. Let \,$\Dc_{\bs Q,v}(x;\bs z)$ be the scalar difference operator
\vvn.3>
\be
\Dc_{\bs Q, v}(x;\bs z)\,=\,1-B_{1,\bs Q,v}(x;\bs z)e^{-\der}+
B_{2,\bs Q,v}(x;\bs z)e^{-2\der}-\dots +
(-1)^N B_{N,\bs Q, v}(x;\bs z)e^{-N\der}\,.
\vv.3>
\ee

Let $\Dc$ be a scalar difference operator. We call the space
\vvn.3>
\be
\{\>f(x)\ |\ \Dc f(x)=0, \ \,f(x)
\ \text{is a linear combination of quasi-exponential functions}\>\}
\vv.2>
\ee
the {\it quasi-exponential kernel of the operator\/} \,$\Dc$\,.

\smallskip
Let \,$\Uc$ \,be the complex span of \,$1$-periodic quasi-exponentials
\,$e^{2\pi\sqrt{-1}\,kx}$, \,$k\in\Z$. 

\begin{prop}[\cite{generating}]
\label{cite2}
Let \,$\bs z$ \,and \,$\bs Q$ be generic. \,For every $N$-dimensional
complex space $V$ of quasi-exponentials with bases \,$\bs Q$ such that
\be
\Wr^d_\half(V)\,=\,
\prod_{i=1}^n\,\bigl(x-z_i+(N+1)/2\bigr)\;\prod_{j=1}^N Q_j^x\,,
\ee
there exists
an eigenvector \,$v\in \bs W(z)$ of the Bethe algebra \,$\Bc_{\bs Q}$ such that
the quasi-exponential kernel of the operator \,$\Dc_{\bs Q,v}(x;\bs z)$ has
the form \,$V\ox\Uc$\,.
\qed
\end{prop}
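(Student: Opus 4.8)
The plan is to construct a bijection between the joint eigenlines of the Bethe algebra $\Bc_{\bs Q}$ on $\bs W(\bs z)$ and the $N$-dimensional spaces of quasi-exponentials with bases $\bs Q$ whose discrete Wronskian is the prescribed polynomial, and then to force this correspondence to be onto by a cardinality count available for generic $\bs z$ and $\bs Q$.

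I would begin by isolating the genericity input. For generic $\bs z,\bs Q$ the commutative algebra $\Bc_{\bs Q}$ acts on $\bs W(\bs z)$ with simple joint spectrum and is diagonalizable, so that $\bs W(\bs z)$ splits into exactly $\dim\bs W(\bs z)=N^n$ joint eigenlines. This is where completeness of the algebraic Bethe ansatz for the $\gln$ XXX chain enters; it supplies both the diagonalizability and the count.

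Next I analyze, for a fixed eigenvector $v$, the scalar difference operator $\Dc_{\bs Q,v}(x;\bs z)$. It is monic of order $N$ in $e^{-\der}$ with rational coefficients in $x$; since its step equals the period of the functions in $\Uc$, each element of $\Uc$ commutes with $e^{-\der}$ and lies in $\ker(e^{-\der}-1)$, so the quasi-exponential kernel is a free $\Uc$-module of rank $N$ and has the form $V\ox\Uc$ with $\dim V=N$. To identify the bases of $V$, I use that $T_{ab}(x;\bs z)\to\delta_{ab}$ as $x\to\infty$, whence the symbol of $\Dc_{\bs Q}$ degenerates to $\prod_{a=1}^N(1-Q_ae^{-\der})$; its characteristic roots $Q_1,\dots,Q_N$ force every quasi-exponential in $V$ to have a base among the $Q_a$, so $V$ is a space of quasi-exponentials with bases $\bs Q$. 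Finally I pin down the discrete Wronskian through the top coefficient: $B_{N,\bs Q,v}(x;\bs z)$ equals the ratio $D(x)/D(x-1)$ of the Casoratian $D$ of a basis of $V$ formed with one-sided shifts, while the scalar action $B_{N,\bs Q}(x;\bs z)=b_{\bs Q}(x;\bs z)\cdot\on{Id}$ forces this ratio to be $Q_1\cdots Q_N\prod_i(x-z_i+1)/(x-z_i)$. Solving for $D$ and converting its one-sided centering into the symmetric centering of $\Wr^d_\half$ introduces the shift by $(N+1)/2$ and yields $\Wr^d_\half(V)=\prod_{i=1}^n(x-z_i+(N+1)/2)\prod_{j=1}^NQ_j^x$, as required.

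It then remains to show that the assignment $v\mapsto V$ is a bijection onto the spaces with this Wronskian. Injectivity is immediate: $\Dc_{\bs Q,v}$ is recovered from $V$, hence so are all eigenvalues $B_{i,\bs Q,v}(x;\bs z)$, and by simplicity of the spectrum distinct eigenlines yield distinct $V$. For surjectivity I would count the target: the fiber of the discrete Wronski map over $\prod_i(x-z_i+(N+1)/2)$ consists, for generic $\bs z$ and $\bs Q$, of exactly $N^n$ spaces — the same number as eigenlines — so the injection is forced to be a bijection and the existence of $v$ follows. The main obstacle is precisely this cardinality match: one must show both that the Bethe spectrum is simple with exactly $N^n$ points and that the generic discrete Wronski fiber is reduced of the same size. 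These two enumerations carry the real content; once they are in hand the remainder of the argument is formal.
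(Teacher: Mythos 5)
The paper itself does not prove this proposition: it is imported wholesale from \cite{generating}, where it is established constructively by the algebraic Bethe ansatz (given $V$, one extracts the Bethe roots from the data of $V$, verifies the Bethe equations, and builds the eigenvector whose eigenvalues are the coefficients of the difference operator annihilating $V$). Your proposal runs in the opposite direction, eigenvector $\to$ kernel followed by a counting argument, and it has a genuine gap at its central step. You assert that because $\Dc_{\bs Q,v}(x;\bs z)$ is monic of order $N$ in $e^{-\der}$ with rational coefficients, its quasi-exponential kernel is automatically a free $\Uc$-module of rank $N$. This is false for general operators of this shape: the order-one operator $1-x\,e^{-\der}$ annihilates no nonzero quasi-exponential at all, since $p(x)\,Q^x=x\,p(x-1)\,Q^{x-1}$ forces $Q\,p(x)=x\,p(x-1)$, which is impossible for a nonzero polynomial $p$ by comparing degrees; the genuine solutions are Gamma-function-like and lie outside the quasi-exponential class. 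The existence of a full $N$-dimensional space of quasi-exponential solutions is a very special property of the Bethe eigenvalues --- it is exactly the main theorem of \cite{generating}, as its title announces --- so at this point your argument assumes the substance of what is to be proved rather than deriving it from periodicity of $\Uc$ and the order of the operator.

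The remaining scaffolding also leans on two enumerative theorems that you defer, and the second carries a risk of circularity. That the joint spectrum of $\Bc_{\bs Q}$ on $\bs W(\bs z)$ is simple and diagonalizable with exactly $N^n$ eigenlines for generic $\bs z,\bs Q$ is a nontrivial completeness statement (though provable, e.g., by degeneration). More seriously, the claim that the fiber of the discrete Wronski map over a generic degree-$n$ polynomial consists of exactly $N^n$ spaces of quasi-exponentials with bases $\bs Q$ is, in the literature, typically \emph{deduced from} the Bethe-ansatz correspondence you are trying to establish, not proved independently of it; to use it as an input you would need a genuinely independent proof (say, by intersection theory or an explicit degeneration of the Wronski map). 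Granting independent proofs of the kernel property, the spectrum count, and the fiber count, your injectivity-plus-cardinality argument does close correctly --- the identification of the bases via the symbol at $x\to\infty$ and of the Wronskian via the eigenvalue $B_{N,\bs Q,v}=b_{\bs Q}$ are both sound --- but as written the "formal remainder" conceals precisely the content of the proposition.
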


Recall that the module $\bs W(\bs z)$ as a vector space is $W^{\ox n}$.
Let \,$\langle\cdot\,,\<\cdot\rangle$ be the standard sesquilinear form
on $W^{\ox n}$:
\vvn-.6>
\be
\langle v_{a_1}\ox\dots\ox v_{a_n},
v_{b_1}\ox\dots\ox v_{b_n}\rangle=\prod_{i=1}^n\delta_{a_ib_i}\,.
\vv-.1>
\ee
We assume that the form is linear with respect to the first argument and
semilinear with respect to the second one.
The form is clearly a positive definite form.

\smallskip
The following proposition is the key technical fact about
the Bethe algebra we need for the proof of Theorem~\ref{theorem1}.

\begin{prop}\label{symmetry} We have
\begin{enumerate}
\item For every \,$i=1,\dots,n-1$, \,and \;$j=1,\dots,N$,
\beq
\label{1}
\Rc_{(i,i+1)}(z_i-z_{i+1})\>B_{j,\bs Q}(x;\bs z)\,=\,
B_{j,\bs Q}(x;\sigma_{i,i+1}\>\bs z)\>\Rc_{(i,i+1)}(z_i-z_{i+1})\,,
\eeq
where \;$\Rc_{(i,i+1)} (x)=xP_{(i,i+1)}+1$ \;and
\;$\sigma_{i,i+1}\>\bs z=(z_1,\dots,z_{i-1},z_{i+1},z_i,z_{i+2},\dots,z_n)$\,.

\smallskip
\item For every $j=0,\dots, N$, \,and \,$v,w\in \bs W(\bs z)$,
\beq
\label{2}
\bigl\langle B_{j,\bs Q}(x;\bs z)\>v,w\bigr\rangle\,=\,b_{\bs Q}(x;\bs z)\,
\bigl\langle v, B_{N-j,\Qbb^{-1}}(-\bar x-1;-\bar{\bs z})\>w\bigr\rangle\,,
\eeq
where \;$-\bar{\bs z}=(-\bar z_1,\dots,-\bar z_n)$,
\;$\Qbb^{-1}=(\Qb_1^{-1},\dots,\Qb_N^{-1})$\,,
\;and the bar denotes the complex conjugation
\end{enumerate}
\end{prop}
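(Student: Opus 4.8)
The plan is to treat the two statements separately: Part \Ref{1} is the quantum-integrability (intertwining) property of the transfer matrices and is essentially formal, while Part \Ref{2} is a self-duality of the Bethe algebra with respect to the Hermitian form and is where the real work lies.

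For Part \Ref{1}, I would first note that on the evaluation module $W(z)$ the operator $T(x)$ acts as the $R$-matrix $R(x-z)$, and that by the coproduct the series $B_{j,\bs Q}(x;\bs z)$ is the image in $\End(\bs W(\bs z))$ of a single universal element $B_{j,\bs Q}(x)\in Y(\gln)$ built by a $\bs z$-independent formula from $\Dc_{\bs Q}$. It therefore suffices to exhibit a $Y(\gln)$-module isomorphism $\bs W(\bs z)\to\bs W(\sigma_{i,i+1}\bs z)$ implemented by $\Rc_{(i,i+1)}(z_i-z_{i+1})$, since any module homomorphism automatically intertwines the action of every element of $Y(\gln)$, in particular of the coefficients of $B_{j,\bs Q}(x)$. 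That $\Rc_{(i,i+1)}(z_i-z_{i+1})=(z_i-z_{i+1})P_{(i,i+1)}+1$ is such an isomorphism is the standard consequence of the defining relation \Ref{rel}: up to the scalar $z_i-z_{i+1}$ it is the intertwiner $P_{(i,i+1)}R_{(i,i+1)}(z_i-z_{i+1})$, and since it acts trivially on the auxiliary space and on the remaining tensor factors it commutes through the rest of the monodromy, the Yang--Baxter equation guaranteeing compatibility at the two sites $i,i+1$.

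For Part \Ref{2}, the first step is to identify the adjoint with respect to $\langle\cdot\,,\cdot\rangle$ of the generators. Since $\langle E_{ba}u,u'\rangle=\langle u,E_{ab}u'\rangle$ for the orthonormal basis, on a single evaluation module one gets $(T_{ab}(x))^*=\delta_{ab}+(\ol x-\ol z)^{-1}E_{ab}$; that is, the form-adjoint is the matrix transposition $T_{ab}\mapsto T_{ba}$ together with the conjugation $x\mapsto\ol x$, $z\mapsto\ol z$ of the parameters. Transposition is an anti-automorphism of $Y(\gln)$, so I would next compute how the universal difference operator $\Dc_{\bs Q}$ transforms under it. Transposition reverses the order of the factors $X_{1\sigma(1)}\dots X_{N\sigma(N)}$ and turns the column-ordered quantum determinant into a row-ordered one; combined with the antipode relation \Ref{antipode}, which encodes $T(x)^{-1}$ and hence the quantum cofactor (adjugate) expansion with the quantum determinant as normalization, this rewrites the transposed operator in terms of the dual data $\Qbb^{-1}$ and $-\ol{\bs z}$.

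The crux is then to see that this algebraic manipulation produces exactly the reflection $x\mapsto-\ol x-1$, the exchange $j\leftrightarrow N-j$, and the prefactor $b_{\bs Q}(x;\bs z)$. Two mechanisms combine: first, the crossing symmetry of the rational $R$-matrix, whereby the partial transpose in the quantum space replaces the flip $P$ by $K=\sum_{a,b}E_{ab}\ox E_{ab}$ and the identity $(R(u)^{t})^{-1}\propto R(-u-N)^{t}$ forces the spectral parameter to be reflected; and second, the observation that under $x\mapsto-x-1$ the shift $e^{-\der}$ is sent to $e^{\der}$, so that reversing the difference operator carries the coefficient of $e^{-j\der}$ to that of $e^{-(N-j)\der}$, the normalization restoring the monic leading term being precisely $b_{\bs Q}(x;\bs z)=B_{N,\bs Q}(x;\bs z)$. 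Reading off the coefficient of $e^{-j\der}$ in the resulting identity of difference operators should yield \Ref{2}. The main obstacle I anticipate is this last step: the delicate bookkeeping needed to combine the transposition anti-automorphism, the antipode/quantum-cofactor identity, the crossing reflection, and the shift reversal so that all conjugations, the precise $+1$ shift in $-\ol x-1$ and in $\prod_i(x-z_i+1)/(x-z_i)$, the inversion $\bs Q\mapsto\Qbb^{-1}$, and the quantum-determinant prefactor emerge simultaneously and consistently; reconciling the naive crossing shift by $N$ with the required shift by $1$ is exactly the point that will demand care.
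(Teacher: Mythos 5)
Your treatment of Part \Ref{1} is correct and is essentially the paper's own argument: the paper conjugates the monodromy matrix \Ref{Txz} by $\Rc_{(i,i+1)}(z_i-z_{i+1})$ and invokes the Yang--Baxter equation, which is exactly the statement that $\Rc_{(i,i+1)}(z_i-z_{i+1})$ is a module isomorphism $\bs W(\bs z)\to\bs W(\sigma_{i,i+1}\>\bs z)$; your module-theoretic phrasing and the paper's computation via \Ref{B=T} and \Ref{Txz} are the same proof.

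For Part \Ref{2} your overall architecture also agrees with the paper's: the adjoint of $T_{ab}(x)$ with respect to the form is transposition combined with conjugation of the parameters (this is the content of Proposition~4.11 of \cite{bethe}, which the paper quotes), and what remains is an algebraic identity converting the transposed operator into $B_{N-j}$ with inverted $\bs Q$ and reflected argument. But that identity is precisely where your plan has a genuine gap. What must be proved is that, on $\bs W(\bs z)$, one has $B_{k,\bs Q}(x)=b(x;\bs z)\,B_{N-k,\bs Q^{-1}}(-x-1;-\bs z)$, and your proposed mechanism for the reflection --- the partial-transpose crossing identity $(R(u)^{t})^{-1}\propto R(-u-N)^{t}$ --- is the wrong tool: it builds in a shift by $N$, and, as you yourself concede, you cannot reconcile that with the required shift by $1$. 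The paper never uses crossing symmetry. It uses the \emph{full} transposition, under which the rational $R$-matrix is symmetric, $(R(x))^t=R(x)$, together with the unitarity $R(x)\>R(-x)=1-x^{-2}$; these yield \Ref{ST}, a reflection $x\mapsto-x$, $\bs z\mapsto-\bs z$, with no shift at all. The shift by $1$ and the exchange $k\leftrightarrow N-k$ then come from the precise form of the Nazarov--Tarasov antipode formula for quantum minors, $S\bigl(T^{\wedge k}_{\bs{ij}}(x)\bigr)\,\qdet(x)=T^{\wedge(N-k)}_{\bs j'\<\bs i'}(x-k)$, i.e.\ formula \Ref{NT}, combined with the internal shifts $x,x-1,\dots,x-k+1$ in the definition \Ref{Tk} of the minors and with $\rho_{\bs z}\bigl(\qdet(x)\bigr)=b(x;\bs z)$. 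Your appeal to ``the quantum cofactor expansion with the quantum determinant as normalization'' gestures at this theorem, but without its exact shifts the bookkeeping you flag as the anticipated obstacle cannot be completed; likewise, your heuristic that $x\mapsto-x-1$ turns $e^{-\der}$ into $e^{\der}$ and hence swaps the coefficient of $e^{-j\der}$ with that of $e^{-(N-j)\der}$ is a statement about scalar difference operators and does not by itself produce the required operator identity, whose coefficients do not commute. So the missing ingredient is exactly \Ref{NT} (or an equivalent statement), and once it is in hand the crossing-symmetry step should be discarded rather than repaired.
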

Proposition \ref{symmetry} is proved in Appendix.

\section{Proof of Theorem~\ref{theorem1}}
\label{proof sec}

Recall that $Q_1,\dots,Q_N$ are nonzero real numbers, $h\ne 0$ is a purely
imaginary number and $V$ is a space of quasi-exponentials $V$ with bases
$\bs Q$. We also assume that
\be
\Wr^d_h(V)\,\prod_{j=1}^N\,Q_j^{-x}\>=\,\prod_{i=1}^n\,(x-z_i)
\ee
is a polynomial with real coefficients, and \;$|\Imag z_i\>|\leq|h|$ \,for all
\,$i=1\lc n$\,.

\begin{lem}
\label{31}
Assume that the conclusion of Theorem~\ref{theorem1} holds for generic
$\bs Q,\bs z=(z_1\lc z_n)$ satisfying the assumption of
Theorem~\ref{theorem1}. Then the conclusion of Theorem~\ref{theorem1} holds for
arbitrary $\bs Q,\bs z$ satisfying the assumption of Theorem~\ref{theorem1}.
\end{lem}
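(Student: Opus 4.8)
The plan is to realize the reality property as the fixed-point set of a continuous involution and then to propagate it from the dense generic locus to all admissible data by a properness (limit) argument. First I would fix the degrees of the polynomials $p_i$, so that the pairs $(\bs Q,V)$, with $V$ a space of quasi-exponentials with bases $\bs Q$ whose discrete Wronskian has degree $n$, form a single variety $X$ fibered over the space of bases. The discrete Wronski map $\pi\colon X\to\{\bs Q\}\times\C^n$ sends $(\bs Q,V)$ to $\bs Q$ together with the coefficients of $\Wr^d_h(V)\prod_j Q_j^{-x}=\prod_i(x-z_i)$. Fiberwise complex conjugation induces an antiholomorphic involution $(\bs Q,V)\mapsto(\overline{\bs Q},\overline V)$ on $X$; when $\bs Q\in(\R^\times)^N$ it fixes $\bs Q$, and $V$ is real exactly when $\overline V=V$. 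Thus the real locus is the fixed-point set of a continuous involution, and is in particular closed in $X$.

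Next I would record the structural fact that $\pi$ is a finite, proper morphism whose fibers have constant cardinality counted with multiplicity; over the generic locus $U$ (matching the genericity of $\bs z$ and of $\bs Q$ required by Proposition~\ref{cite2}) it is unramified of some degree $d$, and by the hypothesis of the lemma every point of each such generic fiber is real. Properness is the essential input: it guarantees that if generic data $(\bs Q^{(m)},\bs z^{(m)})$ converge to admissible data $(\bs Q,\bs z)$, then the fibers $\pi^{-1}(\bs Q^{(m)},w^{(m)})$ converge to the special fiber $\pi^{-1}(\bs Q,w)$ with no escape to infinity and no drop in dimension, so that every $V\in\pi^{-1}(\bs Q,w)$ is a limit $V=\lim_m V^{(m)}$ of points $V^{(m)}\in\pi^{-1}(\bs Q^{(m)},w^{(m)})$.

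Then I would verify that generic admissible data are dense among all admissible data. The admissible configurations are those with $\bs Q\in(\R^\times)^N$ and $w\in\R[x]$ monic of degree $n$ with all roots in the closed strip $|\Imag z|\le|h|$; since complex roots occur in conjugate pairs, keeping $w$ real, this is a set with nonempty interior in the relevant real parameter space, and the genericity conditions cut out a dense subset of it. Hence an arbitrary admissible $(\bs Q,\bs z)$ and an arbitrary $V$ over it can be approximated as in the previous step by generic admissible data with $V=\lim_m V^{(m)}$. Since each $V^{(m)}$ is real, i.e.\ $\overline{V^{(m)}}=V^{(m)}$, continuity of the involution gives $\overline V=\overline{\lim_m V^{(m)}}=\lim_m\overline{V^{(m)}}=\lim_m V^{(m)}=V$, so $V$ is real, which is the conclusion of Theorem~\ref{theorem1} for the given data.

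The principal obstacle is the structural claim in the second step: that the discrete Wronski map is finite and proper, with the special fiber equal to the set of limits of the nearby generic fibers. This is exactly where the representation-theoretic count enters, since knowing via the Bethe ansatz and Proposition~\ref{cite2} that the generic fiber cardinality equals the number of relevant Bethe vectors lets one exhibit a flat family of constant fiber degree, which forces properness and rules out degenerations — colliding roots $z_i$, collapse of the $p_i$, or loss of dimension of $V$ — from letting a point of the special fiber fail to be approached. Controlling these degenerations of spaces of quasi-exponentials as the parameters specialize is the delicate part; everything else is soft, being the closedness of a fixed-point set and the density of the generic locus.
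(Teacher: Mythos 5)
Your proposal is correct and is essentially the paper's own argument: the paper proves Lemma~\ref{31} by a one-line citation to Sections~3.1--3.2 of \cite{dshapiro}, and the reasoning there is exactly your scheme --- reality as the (closed) fixed locus of an antiholomorphic involution, density of generic admissible data, and finiteness/properness of the discrete Wronski map guaranteeing that every space over special data is a limit of spaces over nearby generic data. The structural fact you correctly single out as the delicate point (finiteness of the Wronski map and the resulting control of fibers under degeneration) is precisely what the cited sections of \cite{dshapiro} establish.
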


\begin{proof}
The proof follows the reasoning in Sections~3.1, 3.2 of~\cite{dshapiro}.
\end{proof}

To use the results from Section~\ref{sec2}, we make a change of variables. Set
\vvn.3>
\be
\Qt_i=Q_i^{2h}\>,\qquad \zti_i=\>\frac{z_i}{2h}\,,\qquad i=1\lc N\,,
\ee
and let $\Vt$ be the space of quasi-exponentials with bases $\Qbt\in\C^N$,
given by
\vvn.3>
\beq
\label{Vt}
\Vt\,=\,\bigl\{\>p\bigl(2hx+h\>(N+1)\bigr)\>\Qt^x \ \>|
\ \,p(x)\>Q^x\in V\>\}\,.
\eeq
Then
\vvn-.4>
\be
\Wr^d_\half(\Vt)\,=\,
\prod_{i=1}^n\,\bigl(x-\zti_i+(N+1)/2\bigr)\,\prod_{j=1}^N\,\Qt_j^x\,.
\ee

\smallskip
Since the polynomial \,$\prod_{i=1}^n(x-z_i)$ \,has real coefficients,
there is $k$, where $0\leq 2k\leq n$, and an enumeration of $z_1\lc z_n$
such that that \,$\bar z_{2i-1}=z_{2i}$, for \,$i=1\lc k$, \,and
\,$z_{2k+1},\dots,z_n$ are real. Define the form
\,$\langle\cdot\,,\<\cdot\rangle_k$ on $W^{\ox n}$ by the formula
\be
\langle v,w\rangle_k\>=\,\Bigl\langle v\>,\,
\prod_{i=1}^k\,\Rc_{(2i-1,2i)}(\zti_{2i-1}-\zti_{2i})\>w\Bigr\rangle\,.
\ee

\begin{lem}
Assume \;${|\Imag z_i\>|<|h|}$ \,for all \,$i=1\lc 2k$\,. \,Then
the form \,$\langle\cdot\,,\<\cdot\rangle_k$ \,is positive definite.
\end{lem}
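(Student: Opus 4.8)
The plan is to exhibit $\langle\cdot\,,\<\cdot\rangle_k$ as the form $\langle v,Aw\rangle$ for a single operator $A$ that is positive definite and self-adjoint with respect to the standard positive definite form $\langle\cdot\,,\<\cdot\rangle$; this is exactly what it means for $\langle\cdot\,,\<\cdot\rangle_k$ to be a positive definite Hermitian form. First I would identify the arguments of the $R$-matrices. The chosen enumeration gives $\bar z_{2i-1}=z_{2i}$, so $z_{2i-1}-z_{2i}=2\sqrt{-1}\,\Imag z_{2i-1}$ and hence
\[
c_i\,:=\,\zti_{2i-1}-\zti_{2i}\,=\,\frac{z_{2i-1}-z_{2i}}{2h}\,=\,\frac{\sqrt{-1}\,\Imag z_{2i-1}}{h}\,.
\]
Because $h$ is purely imaginary, $\sqrt{-1}/h$ is real, so each $c_i$ is a real number; moreover $|c_i|=|\Imag z_{2i-1}|/|h|<1$ by the hypothesis $|\Imag z_i|<|h|$. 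Setting $A=\prod_{i=1}^k\Rc_{(2i-1,2i)}(c_i)$, we have $\langle v,w\rangle_k=\langle v,Aw\rangle$, and the whole lemma reduces to proving that $A$ is positive definite and self-adjoint.

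For self-adjointness I would note that the flip $P$ has a real symmetric matrix, hence satisfies $\langle Pv,w\rangle=\langle v,Pw\rangle$ and is self-adjoint for $\langle\cdot\,,\<\cdot\rangle$; since each $c_i$ is real, each factor $\Rc_{(2i-1,2i)}(c_i)=c_iP_{(2i-1,2i)}+1$ is self-adjoint. The factors act on the pairwise disjoint index pairs $(1,2),(3,4),\dots,(2k-1,2k)$, so they mutually commute, and a product of commuting self-adjoint operators is self-adjoint; thus $A$ is self-adjoint and $\langle\cdot\,,\<\cdot\rangle_k$ is Hermitian.

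For positivity, recall that on $W\ox W$ the flip acts as $+1$ on the symmetric subspace $S^2W$ and as $-1$ on the antisymmetric subspace $\Lambda^2W$, which are mutually orthogonal for the standard form. Hence $\Rc(c_i)=c_iP+1$ has eigenvalues $1+c_i$ and $1-c_i$, both strictly positive since $|c_i|<1$, so each factor is positive definite on its pair of tensor factors. As the factors act on disjoint slots, $A$ is the tensor product of these positive definite operators with identities on the remaining slots, and a tensor product of positive definite operators is positive definite; therefore $\langle v,v\rangle_k=\langle v,Av\rangle>0$ for every $v\neq 0$.

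The one substantive step is the opening computation that turns the hypotheses into the two conditions ``$c_i$ real'' and ``$|c_i|<1$''; everything afterward is the elementary spectral analysis of $c_iP+1$ together with the tensor decomposition of $A$. It is worth flagging that the strict inequality $|\Imag z_i|<|h|$ is precisely what is needed here: at equality one of the eigenvalues $1\pm c_i$ would vanish and $A$ would be only positive semidefinite, so the form could degenerate.
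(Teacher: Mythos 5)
Your proof is correct and takes essentially the same approach as the paper: the paper's (much terser) argument simply asserts that the factors $\Rc_{(2i-1,2i)}(\zti_{2i-1}-\zti_{2i})$ pairwise commute and that each is a positive definite self-adjoint operator with respect to the standard form, and then concludes. Your computation that $c_i=\sqrt{-1}\,\Imag z_{2i-1}/h$ is real with $|c_i|<1$, together with the spectral analysis of $c_iP+1$ (eigenvalues $1\pm c_i>0$ on the symmetric and antisymmetric subspaces) and the tensor-factorization of the product, is precisely the detail behind those assertions.
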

\begin{proof}
The linear operators \,$\Rc_{(2i-1,2i)}(\zti_{2i-1}-\zti_{2i})$ \,with
$i=1,\dots,k$ are pairwise commuting. Moreover, under our assumptions
each \,$\Rc_{(2i-1,2i)}(\zti_{2i-1}-\zti_{2i})$ \,is a positive definite
selfadjoint operator with respect to the standard form
\,$\langle\cdot\,,\<\cdot\rangle\,$. The lemma follows.
\end{proof}

By Lemma~\ref{31} we can assume that \,$\bs Q$ and \,$\bs z$ are generic.
Then by Proposition~\ref{cite2}, there exists an eigenvector $v\in\bs W(\ztb)$
of the Bethe algebra $\Bc_{\Qbt}$ such that $\Vt$ is in the kernel of
the scalar difference operator $\Dc_{\Qbt,v}(x;\ztb)$.

\smallskip
By Proposition \ref{symmetry}, we have
\vvn.1>
\be
(B_{j,\Qbt}(x;\ztb))^*\,=\,
\overline{b_{\Qbt}(x;\ztb)} B_{N-j,\Qbt}(-\bar x-1;\ztb)\,,
\vv.2>
\vvgood
\ee
where the asterisk denotes the Hermitian conjugation with respect
to the form \,$\langle\cdot\,,\<\cdot\rangle_k$\>. Since the form
\,$\langle\cdot\,,\<\cdot\rangle_k$ is positive definite, it yields
\vvn.3>
\beq
\label{Bbar}
\overline{B_{j,\Qbt,v}(x;\ztb)}\,=\,
\overline{b_{\Qbt}(x;\ztb)}\,B_{N-j,\Qbt,v}(-\bar x-1;\ztb)\,.
\vv.1>
\eeq

Let \,$p(x)\>Q_i^x\in V$, where \,$p(x)$ \,is monic.
\,Set \,$\pti(x)=p\bigl(2hx+h\>(N+1)\bigr)$\,, \,see~\Ref{Vt}.
\,Then \,$\Dc_{\Qbt,v}(x;\ztb)\>\bigl(\pti(x)\Qt_i^x\bigr)=0$\,,
and by formula~\Ref{Bbar},
\vvn.2>
\be
\Dc_{\Qbt,v}(x;\ztb)\>\bigl(\,\overline{\pti(-\bar x-N-1)}\>\Qt_i^x\bigr)
\,=\,0\,.
\vv.2>
\ee
For generic \,$\Qbt\>$, the equation
\,$\Dc_{\Qbt,v}(x;\ztb)\>\bigl(q(x)\>\Qt_i^x\bigr)=0$ \,determines a polynomial
\,$q(x)$ \,unique\-ly up to multiplication by a constant. Hence
\,$\overline{\pti(-\bar x-N-1)}=\pti(x)$, \,because the top coef\-ficients of
both polynomials are the same. Taking \,$y=2hx+h\>(N+1)$ \,gives
\,$\overline{p(\bar y)}=p(y)$. Theorem~\ref{theorem1} is proved.

\section{Matrix reformulation}
\label{ref sec}

In this section we give a matrix formulation of Theorem \ref{theorem1}.

\smallskip
Let \,$\Zc$ be an $N\times N$ matrix with entries \;$\Zc_{ii}=\>a_i$ \,for
\,$i=1\lc N$, \,and
\vvn.3>
\be
\Zc_{ij}\>=\,\frac1{\sin(\la_i-\la_j)}\,,\qquad 1\leq i<j\leq N\,.
\vv.2>
\ee
Here \,$a_1\lc a_N$ \,and \,$\la_1\lc\la_N$ are complex numbers.
For \,$i=1\lc n$, \,set
\vvn.3>
\be
p_i(x)\,=\,x-a_i-\sum_{j\ne i}\cot(\la_i-\la_j)\,.
\vv-.1>
\ee
Let $V$ be the space
of quasi-exponentials spanned by \,$p_i(x)\>e^{\la_ix}$, \;$i=1\lc N$.

\begin{lem}
\label{Wron}
We have \;$\det(x-\Zc)\,=\,\Wr^d_{\sqrt{-1}}(V)\,e^{-\!\sum_{i=1}^N\la_ix}$\,.
\end{lem}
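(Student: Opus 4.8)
The plan is to collapse the whole left-hand side to the characteristic polynomial of an auxiliary matrix $B$ and then to prove that $B$ is cospectral with $\Zc$. First I would factor the exponentials out of the Casorati determinant. Writing $s_j=\sqrt{-1}\,(2j-N-1)$ and $c_i=a_i+\sum_{k\neq i}\cot(\la_i-\la_k)$, each entry is $p_i(x+s_j)e^{\la_i(x+s_j)}=(x-c_i+s_j)e^{\la_ix}e^{\la_is_j}$, so the literal determinant of the basis $\{p_i(x)e^{\la_ix}\}$ equals $e^{\sum_i\la_ix}\det\tilde M$ with $\tilde M_{ij}=(x-c_i+s_j)e^{\la_is_j}$. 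Setting $\tilde A_{ij}=e^{\la_is_j}$, the polynomial $\det\tilde M$ has leading coefficient $\det\tilde A$, so dividing by it (the monic normalization) gives $\Wr^d_{\sqrt{-1}}(V)\,e^{-\sum_i\la_ix}=\det\tilde M/\det\tilde A$. Since $\tilde M=\diag(x-c_i)\,\tilde A+\tilde A\,\diag(s_j)$ factors as $(xI-B)\tilde A$ with $B=\diag(c_i)-\tilde A\,\diag(s_j)\,\tilde A^{-1}$, this reduces the lemma to the identity $\det(xI-B)=\det(xI-\Zc)$.

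Next I would identify the matrix $\tilde A\,\diag(s_j)\,\tilde A^{-1}$. With $q_i=e^{2\sqrt{-1}\la_i}$ it is exactly the differentiation matrix carrying the nodal values at $\la_1,\dots,\la_N$ of a trigonometric polynomial of frequencies $2j-N-1$ to the nodal values of its derivative, so its entries are read off from the cardinal functions $L_r(\la)=q_r^{(N-1)/2}q^{-(N-1)/2}\prod_{s\neq r}(q-q_s)/(q_r-q_s)$, $q=e^{2\sqrt{-1}\la}$. A direct computation of $L_r'(\la_i)$ gives the diagonal entries $\sum_{k\neq i}\cot(\la_i-\la_k)$, so that $B_{ii}=c_i-\sum_{k\neq i}\cot(\la_i-\la_k)=a_i$, matching $\Zc$; the off-diagonal entries come out as $B_{ij}=-\frac{2\sqrt{-1}\,q_i^{(3-N)/2}q_j^{(N-1)/2}}{q_i-q_j}\cdot\frac{\omega'(q_i)}{\omega'(q_j)}$, where $\omega(q)=\prod_s(q-q_s)$.

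The crucial point — and the main obstacle — is that for $N\geq3$ one has $B\neq\Zc$, so the lemma is a genuine cospectrality statement, not an equality of matrices; I would prove it by comparing the two determinants term by term in their permutation expansions. Around any cycle $(i_1\dots i_l)$ the gauge factors $\omega'(q_i)/\omega'(q_j)$ telescope to $1$, and using $q_i^{(3-N)/2}q_i^{(N-1)/2}=q_i$ one finds that the product of the entries of $B$ around the cycle equals $(-1)^l\prod_a\Zc_{i_ai_{a+1}}$; the half-integer powers of the $q_i$ cancel, so no branch ambiguity survives. Because the off-diagonal part of $\Zc$ is antisymmetric, $\Zc_{ji}=-\Zc_{ij}$, the same cycle product for the transpose $\Zc^{\,T}$ is also $(-1)^l\prod_a\Zc_{i_ai_{a+1}}$. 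Thus $B$ and $\Zc^{\,T}$ have equal diagonal entries and equal cycle products, so their expansions of $\det(xI-\cdot)$ agree monomial by monomial; since the characteristic polynomial is transpose-invariant, $\det(xI-B)=\det(xI-\Zc^{\,T})=\det(xI-\Zc)$, proving the lemma. The delicate step is exactly this recognition that $B$ is cospectral to $\Zc$ only via $\Zc^{\,T}$, with the antisymmetry of the $1/\sin$ part and the transpose-invariance of the characteristic polynomial supplying the cancellations that an entrywise comparison misses.
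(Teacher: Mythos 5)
Your proof is correct; I verified its key computations. Be aware, though, that the paper gives no actual proof of Lemma~\ref{Wron}: it is a one-line reference to ``the same calculation as in Section~6.1 of~\cite{dshapiro}'', i.e.\ the rational analogue with matrix entries $1/(\mu_i-\mu_j)$ and the differential Wronskian. So your write-up is, in spirit, the same determinant-factorization route, but you supply in full the trigonometric/discrete calculation that the paper outsources. The individual steps check out: the factorization $\tilde M=(xI-B)\tilde A$ and the monic normalization by $\det\tilde A$ are right (note this needs $\det\tilde A\ne0$, i.e.\ $\la_i-\la_j\notin\pi\Z$, which is implicit in the definition of $\Zc$); the diagonal entries of the differentiation matrix are indeed $\sum_{k\ne i}\cot(\la_i-\la_k)$, since $2\sqrt{-1}\,q_r/(q_r-q_s)=\cot(\la_r-\la_s)+\sqrt{-1}$ and the $(N-1)$ terms $+\sqrt{-1}$ cancel the $-\sqrt{-1}\,(N-1)$ coming from differentiating the prefactor $q^{-(N-1)/2}$; and your off-diagonal formula for $B$ is what the computation yields. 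Your final step can be streamlined: writing $q_i-q_j=2\sqrt{-1}\,q_i^{1/2}q_j^{1/2}\sin(\la_i-\la_j)$ with $q_i^{1/2}=e^{\sqrt{-1}\la_i}$, your formula becomes $B_{ij}=-\,d_i\>d_j^{-1}\>\Zc_{ij}=d_i\>d_j^{-1}(\Zc^T)_{ij}$ with $d_i=q_i^{(2-N)/2}\omega'(q_i)$, hence $B=D\>\Zc^T D^{-1}$ for $D=\diag(d_1\lc d_N)$, and cospectrality follows at once from similarity together with $\det(xI-\Zc^T)=\det(xI-\Zc)$; your cycle-product comparison is an equivalent, branch-free formulation of exactly this diagonal gauge equivalence, valid but longer than necessary. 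Finally, your reading of the definition of $\Zc$ --- that $\Zc_{ij}=1/\sin(\la_i-\la_j)$ holds for all $i\ne j$, so the off-diagonal part is antisymmetric --- is the intended one (it is what degenerates to Theorem~6.4 of~\cite{dshapiro} in the limit described in the paper), and since your argument uses that antisymmetry in an essential way, it was right to make it explicit.
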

\begin{proof}
The statement can be obtained by the same calculation as in Section~6.1
of~\cite{dshapiro}.
\end{proof}

\begin{theorem}
\label{theorem1a}
Let \,$\la_1,\dots,\la_N$ be real numbers such that \,$\la_i-\la_j\nin\pi\Z$
\,for all \,$i,j=1\lc N$. Assume that the characteristic
polynomial \,$\det(x-\Zc)=\prod_{i=1}^N(x-z_i)$ \,has real coefficients and
\;$|\<\Imag z_i|\leq 1$ \,for all $i=1\lc N$.
Then the numbers \,$a_1,\dots,a_N$ are real.
\end{theorem}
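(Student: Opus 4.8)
The plan is to deduce Theorem~\ref{theorem1a} from Theorem~\ref{theorem1} via the identification of characteristic polynomials with discrete Wronskians furnished by Lemma~\ref{Wron}, and then to translate the reality of the space $V$ into the reality of the individual numbers $a_i$. The reduction itself is essentially formal; the real content lies in the last step.

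First I would set $Q_i=e^{\la_i}$ and $h=\sqrt{-1}$, so that $V$ becomes a space of quasi-exponentials with bases $\bs Q=(e^{\la_1},\dots,e^{\la_N})$. Since the $\la_i$ are real, each $Q_i$ is a nonzero real number, and the hypothesis $\la_i-\la_j\nin\pi\Z$ guarantees in particular (taking the integer $0$) that the $\la_i$, and hence the $Q_i$, are pairwise distinct, so that the $N$ functions $p_i(x)e^{\la_i x}$ are linearly independent and genuinely span an $N$-dimensional space. By Lemma~\ref{Wron}, $\Wr^d_{\sqrt{-1}}(V)=\det(x-\Zc)\,e^{\sum_i\la_i x}=\prod_{i=1}^N(x-z_i)\prod_{j=1}^N Q_j^x$, so the monic polynomial part of the discrete Wronskian of $V$ is exactly $w(x)=\det(x-\Zc)=\prod_i(x-z_i)$. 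I then check the hypotheses of Theorem~\ref{theorem1}: $\bs Q\in(\R^\times)^N$; $\Real h=0$ and $|h|=1$; $w(x)\in\R[x]$ by assumption; and $|\Imag z_i|\le 1=|h|$. Theorem~\ref{theorem1} then gives that $V$ is real.

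The substantive remaining step, which I expect to be the main point, is to recover the reality of the $a_i$ from the reality of $V$. I would write $p_i(x)=x-b_i$ with $b_i=a_i+\sum_{j\ne i}\cot(\la_i-\la_j)$, and observe that $V$ sits inside the $2N$-dimensional space $\bigoplus_{i=1}^N\bigl(\C\,e^{\la_i x}\oplus\C\,x\,e^{\la_i x}\bigr)$. For each $i$ let $\Pi_i$ denote the projection onto the summand $\C[x]\,e^{\la_i x}$ attached to the distinct exponent $\la_i$. Applying $\Pi_i$ to a general element $\sum_k c_k(x-b_k)e^{\la_k x}$ of $V$ kills every term except $k=i$, so $\Pi_i(V)=\C\,(x-b_i)e^{\la_i x}$ is a canonically defined one-dimensional invariant of $V$ whose nonzero generator has root $b_i$; here distinctness of the exponents is what makes the decomposition of $V$ by exponentials canonical.

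Finally I would use that the conjugation $f(x)\mapsto\overline{f(\bar x)}$ preserves each summand $\C[x]\,e^{\la_i x}$ (because $\la_i$ is real) and commutes with $\Pi_i$. Reality of $V$ means $V$ has a conjugation-fixed basis, hence $V$ is stable under this conjugation, and therefore so is the line $\Pi_i(V)$. Since the generator conjugates to $(x-\bar b_i)e^{\la_i x}$, comparison of leading coefficients forces $\bar b_i=b_i$, i.e.\ $b_i\in\R$. Because $\la_i-\la_j\nin\pi\Z$, each $\cot(\la_i-\la_j)$ is real, so $a_i=b_i-\sum_{j\ne i}\cot(\la_i-\la_j)$ is real for every $i$, as claimed. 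The only delicate point is this extraction argument: the passage to Theorem~\ref{theorem1} is immediate once Lemma~\ref{Wron} is available, whereas turning ``$V$ has a real basis'' into reality of the specific entries $a_i$ requires the projection-and-conjugation bookkeeping above.
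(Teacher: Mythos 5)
Your proof is correct and takes essentially the same route as the paper, whose entire proof of Theorem~\ref{theorem1a} is the one-line citation of Theorem~\ref{theorem1} and Lemma~\ref{Wron} (with the same specialization $h=\sqrt{-1}$, $Q_i=e^{\la_i}$). The projection-and-conjugation argument you give for passing from reality of $V$ to reality of the individual $a_i$ is precisely the bookkeeping the paper leaves implicit, and it is sound: since the exponents $\la_i$ are distinct and real, any basis of $V$ consisting of real quasi-exponentials must be made of scalar multiples of the functions $p_i(x)e^{\la_i x}$, which forces each $b_i$, and hence each $a_i$, to be real.
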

\begin{proof}
The statement follows from Theorem~\ref{theorem1} and Lemma~\ref{Wron}.
\end{proof}

If \,$a_i=b_i/\eps$\,, \;$\la_i=\eps\mu_i$\,, \;$z_i=\eps x_i$ \,for all
\,$i=1\lc N$, and \,$\eps\to 0$, then Theorem~\ref{theorem1a} turns into
Theorem~6.4 of \cite{dshapiro}.

\appendix
\section*{Appendix}
\refstepcounter{section}

Given \,$\bs i=(i_1<\dots<i_k)$ \,and \,$\bs j=(j_1<\dots<j_k)$, \,set
\vvn.3>
\beq
\label{Tk}
T^{\wedge k}_{\bs{ij}}(x)\,=\,\sum_{\sigma\in S_k}\,(-1)^\sigma\,
T_{i_1\>j_{\sigma(1)}}(x)\>T_{i_2\>j_{\sigma(2)}}(x-1)\dots
T_{i_k\>j_{\sigma(k)}}(x-k+1)\,.
\eeq
Denote \;$\qdet(x)=T^{\wedge N}_{\bs{ii}}(x)$, \,where \,$\bs i=(1\lc N)$.
We have
\vvn.3>
\beq
\label{B=T}
B_{k,\bs Q}(x)\,=\!\sum_{\bs i\>=(i_1<\dots<i_k)}\!\!
Q_{i_1}\<\dots Q_{i_k}\,T^{\wedge k}_{\bs{ii}}(x)\,.
\eeq

For every \,$\bs i=(i_1<\dots<i_k)$, \,set \,$\bs i'\<=(i'_1<\dots<i'_{N-k})$,
\,where
\vvn.2>
\be
\{i_1\lc i_k,i'_1\lc i'_{N-k}\}=\{1,2,\dots,N\}\,.
\vv.3>
\ee
The following result is known, see~(1.11) in~\cite{NT}.

\begin{theorem}\cite{NT}
We have
\vvn.1>
\beq
\label{NT}
S\bigl(T^{\wedge k}_{\bs{ij}}(x)\bigr)\,\qdet(x)\,=\,
T^{\wedge (N-k)}_{\bs j'\<\bs i'}(x-k)\,,
\vv.2>
\eeq
where $S$ is the antipode.
\end{theorem}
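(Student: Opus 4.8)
The plan is to recognize this identity as the Yangian analogue of the classical Jacobi identity expressing the $k\times k$ minors of an inverse matrix through the complementary $(N-k)\times(N-k)$ minors of the original matrix, divided by the determinant. The argument rests on three facts. First, by the defining relation \Ref{antipode}, the matrix $\Hat T(x)$ with entries $\Hat T_{ab}(x)=S\bigl(T_{ab}(x)\bigr)$ is precisely the two-sided inverse of $T(x)$, so that $\sum_c T_{ac}(x)S\bigl(T_{cb}(x)\bigr)=\delta_{ab}$. Second, $\qdet(x)$ is central in $Y(\gln)$, so it may be moved freely across the non-commuting quantum minors and placed on either side. Third, the quantum minors $T^{\wedge k}_{\bs{ij}}(x)$ are the matrix elements, on the antisymmetric subspace $\Lambda^k W\subset W^{\ox k}$, of the fused operator $T_{(1)}(x)T_{(2)}(x-1)\dots T_{(k)}(x-k+1)$; the top case $k=N$ reproduces $\qdet(x)\cdot\on{Id}$ on the one-dimensional space $\Lambda^N W$.

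First I would establish the base case $k=1$, which is the quantum Cramer rule $S\bigl(T_{ij}(x)\bigr)\,\qdet(x)=T^{\wedge(N-1)}_{\bs j'\bs i'}(x-1)$. This follows from a Laplace-type cofactor expansion of the quantum determinant along a row and a column, combined with the inverse relation coming from the antipode; the complementary index sets $\bs i'$, $\bs j'$ and the single shift $x-1$ enter exactly through the cofactor structure. Then I would bootstrap to arbitrary $k$. The conceptual route is via the exterior-power picture: taking the inverse of the fused operator on $\Lambda^k W$ gives $\bigl(\Lambda^k T(x)\bigr)^{-1}=\Lambda^k\bigl(T(x)^{-1}\bigr)$, whose matrix elements are the quantum minors of $\Hat T(x)$; the complementation isomorphism $\Lambda^k W\cong\Lambda^{N-k}W^{*}\ox\Lambda^N W$ converts these into complementary $(N-k)$-minors of $T$, with the central factor $\qdet(x)$ supplied by the $\Lambda^N W$ component. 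Alternatively, one can induct on $k$ using the quantum Sylvester/Jacobi identity together with repeated Laplace expansions, reducing each step to the already proved $k=1$ case.

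The main obstacle is bookkeeping the spectral shifts and signs. Because $S$ is an anti-homomorphism, it reverses the order of the non-commuting factors $T_{i_1 j_{\sigma(1)}}(x)\dots T_{i_k j_{\sigma(k)}}(x-k+1)$ in \Ref{Tk}, and reordering them back — using the $R$-matrix relation \Ref{rel} at the relevant difference points — generates exactly the compensating shift that turns $x$ into $x-k$ on the right-hand side and produces the sign arising from the reordering of the complementary index sets $\{\bs i,\bs i'\}$ and $\{\bs j,\bs j'\}$. Verifying that the accumulated shifts collapse to the single argument $x-k$ and that the accumulated sign matches the convention built into $T^{\wedge(N-k)}_{\bs j'\bs i'}$ is the delicate computational heart of the proof; everything else is structural.
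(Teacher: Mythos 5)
The first thing to say is that the paper does not prove this theorem at all: it is imported as a known result, with the pointer to formula (1.11) of \cite{NT}, and the Appendix merely applies it. So there is no internal proof to compare yours against; the only meaningful comparison is with the proof in the cited literature (Nazarov--Tarasov, and the account in Molev's book on Yangians), and your plan is essentially that proof: the antipode relation \Ref{antipode} makes $S\bigl(T_{ab}(x)\bigr)$ the entries of $T(x)^{-1}$; $\qdet(x)$ is central; the minors \Ref{Tk} are the matrix elements of the fused product $T_{(1)}(x)\>T_{(2)}(x-1)\cdots T_{(k)}(x-k+1)$ compressed by the antisymmetrizer, with $k=N$ giving $\qdet(x)$; and the identity is the quantum Jacobi/Cram\'er identity obtained by splitting the antisymmetrizer over $N$ tensor factors into the first $k$ and the last $N-k$ slots. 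One mechanical correction to your narrative: the argument $x-k$ on the right-hand side is structural rather than dynamical --- the complementary minor occupies slots $k+1,\dots,N$ of the length-$N$ fusion, whose spectral parameters are $x-k,\dots,x-N+1$ --- it is not generated by R-matrix reordering of the factors reversed by the anti-homomorphism $S$.

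The genuine gap is that you defer exactly the step where all the content lies, and here the deferral is not harmless, because the identity in the form printed here (which is what you set out to prove) is false without a sign factor. With the conventions of \Ref{Tk}, equation \Ref{NT} holds only up to the factor $(-1)^{i_1+\cdots+i_k+j_1+\cdots+j_k}$, equivalently the product of the signs of the shuffles sorting $(\bs i,\bs i')$ and $(\bs j,\bs j')$ into increasing order. Take $N=2$, $k=1$, $\bs i=(1)$, $\bs j=(2)$, and evaluate on the module $W(z)$: there $T(x)$ acts as $R_{(01)}(x-z)=1+(x-z)^{-1}P$, so by \Ref{antipode} and $R(u)\>R(-u)=1-u^{-2}$,
\be
S\bigl(T_{12}(x)\bigr)\,\qdet(x)\quad\text{acts as}\quad
\frac{-(x-z)\>E_{21}}{(x-z)^2-1}\cdot\frac{x-z+1}{x-z}
\,=\,-\,\frac{E_{21}}{x-z-1}\,=\,-\,T_{12}(x-1)\,,
\ee
whereas the right-hand side of \Ref{NT} is $T^{\wedge 1}_{(1)(2)}(x-1)=+\>T_{12}(x-1)$. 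Since an identity in $Y(\gln)$ must hold in every representation, the unsigned statement cannot hold in the algebra. (The paper is unaffected by this: via \Ref{B=T} it only ever uses the diagonal minors $\bs i=\bs j$, for which the sign is $+1$.) So your outline follows the right strategy, but a faithful execution of it would produce the signed identity and collide with the statement as written; the sign and shift bookkeeping you explicitly set aside is precisely where the proof lives, and it cannot be waved through.
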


For the $Y(\gln)$-module \,$W(\bs z)$, \,let
\,$\rho_{\bs z}:Y(\gln)\to\End(W^{\ox N})$
\,be the corresponding al\-gebra homomorphism. Denote
\vvn.2>
\be
T(x;\bs z)\,=\,\sum_{a,b=1}^N E_{ab}\ox\rho_{\bs z}\bigl(T_{ab}(x)\bigr)\,,
\qquad
\Tt(x,\bs z)\,=\,\sum_{a,b=1}^N E_{ab}\ox\rho_{\bs z}(S\bigl(T_{ab}(x))\bigr)
\,.
\vv-.3>
\ee
We have
\beq
\label{Txz}
T(x;\bs z)\,=\,R_{(0N)}(x-z_N)\dots R_{(02)}(x-z_2)R_{(01)}(x-z_1)\,,
\vv-.3>
\eeq
and
\be
\Tt(x,\bs z)\,=\,
(R_{(01)}(x-z_1))^{-1}(R_{(02)}(x-z_2))^{-1}\dots(R_{(0N)}(x-z_N))^{-1}\,,
\vv.3>
\ee
see \Ref{antipode}. Here we enumerate the factors of \,$W^{\ox(N+1)}$ by
\,$0,1\lc N$.
Since \,$\bigl(R(x)\bigr)^t=R(x)$ \,and \,$R(x)\>R(-x)=1-x^{-2}$, we obtain
\beq
\label{ST}
\bigl(\Tt(x;\bs z)\bigr)^t\,=\,
T(-x;-\bs z)\,\frac{b(x-1;\bs z)}{b(x;\bs z)}\;,\qquad
b(x;\bs z)\,=\,\prod_{i=1}^N\,\frac{x-z_i+1}{x-z_i}\;.
\eeq

\smallskip
Let \,$T^{\wedge k}_{\bs{ij}}(x;\bs z)=
\rho_{\bs z}\bigl(T^{\wedge k}_{\bs ij}(x)\bigr)$\,.
Then by~\Ref{Tk} and~\Ref{ST},
\be
S\bigl(T^{\wedge k}_{\bs{ij}}(x)\bigr)\,=\,
T^{\wedge k}_{\bs{ji}}(k-1-x;-\bs z)\;\frac{b(x-k;\bs z)}{b(x;\bs z)}\;.
\ee
In addition, it is known that
\;$\rho_{\bs z}\bigl(\qdet(x)\bigr)=b(x;\bs z)$\,.
Hence taking into account~\Ref{NT} and~\Ref{B=T},
after simple transformations we get
\vvn.3>
\be
\label{STk}
T^{\wedge k}_{\bs{ij}}(x)\,=\,b(x;\bs z)\,
T^{\wedge(N-k)}_{\bs i'\<\bs j'}(-x-1;-\bs z)
\vv-.4>
\ee
and
\vv-.2>
\be
B_{k,\bs Q}(x)\,=\,b(x;\bs z)\,B_{N-k,\bs Q^{-1}}(-x-1;-{\bs z})\,.
\vv.4>
\vvgood
\ee
It is also known that
\vvn.3>
\be
\bigl\langle B_{j,\bs Q}(x;\bs z)\>v,w\bigr\rangle\,=\,
\bigl\langle v, B_{j,\Qbb}(\bar x;\bar{\bs z})\>w\bigr\rangle\,,
\vv.4>
\ee
for every $j=1\lc N$, \,and \,$v,w\in \bs W(\bs z)$, see for example,
Proposition~4.11 in~\cite{bethe}. This proves formula~\Ref{2} in
Proposition~\ref{symmetry}.

\smallskip
Formula~\Ref{1} follows from \Ref{B=T}, \Ref{Txz}, and the Yang-Baxter equation
\vvn.3>
\begin{align*}
\Rc_{(i,i+1)}(z_i-z_{i+1})\> &R_{(0,i+1)}(x-z_{i+1})\>R_{(0i)}(x-z_i)\,=\,
\\[3pt]
{}=\,{}& R_{(0,i+1)}(x-z_i)\>R_{(0i)}(x-z_{i+1})\>\Rc_{(i,i+1)}(z_i-z_{i+1})\,.
\end{align*}

\bigskip

\end{document}